\newtheorem{theorem}{Theorem}
\newtheorem{definition}[theorem]{Definition}
\newtheorem{example}[theorem]{Example}
\newtheorem{lemma}[theorem]{Lemma}
\newtheorem{remark}[theorem]{Remark}
\newenvironment{proof}[1][Proof]{\noindent\textbf{#1.} }{\ \rule{0.5em}{0.5em}}
\begin{document}

\title{Structures in Concrete Categories}
\author{Henri Bourl\`{e}s\thanks{%
Satie, ENS de Cachan/CNAM, 61 Avenue Pr\'{e}sident Wilson, F-94230 Cachan,
France (henri.bourles@satie.ens-cachan.fr) }}
\maketitle

\begin{abstract}
The monumental treatise "\'{E}l\'{e}ments de math\'{e}matique" of N.
Bourbaki is based on the notion of structure and on the theory of sets. \ On
the other hand, the theory of categories is based on the notions of morphism
and functor. An appropriate definition of a structure in a concrete category
is given in this paper. This definition makes it possible to bridge the gap
between Bourbaki's structural approach and the categorical language.
\end{abstract}

\section{Introduction}

The monumental treatise "\'{E}l\'{e}ments de math\'{e}matique" of N.
Bourbaki is based on the notion of structure and on the theory of sets \cite%
{Bourbaki-E}. \ On the other hand, the theory of categories is based on the
notions of morphism and functor. \ The notion of "concrete category", as
developed in \cite{Adamek}, did not bridge the gap between the categorical
approach and Bourbaki's structures. According to \cite{Adamek}, a structure
in a concrete category $\left( \mathcal{C},\left\vert .\right\vert ,\mathcal{%
X}\right) ,$ where $\mathcal{X}$ is the base category and $\left\vert
.\right\vert :\mathcal{C}\rightarrow \mathcal{X}$ is the forgetful functor,
is an object $A$ of $\mathcal{C}$. \ If $A,$ $A^{\prime }$ are two $\mathcal{%
C}$-isomorphic objects such that $\left\vert A\right\vert =\left\vert
A^{\prime }\right\vert ,$ these objects are not equal in general. \
Nevertheless, if $\mathcal{C}$ is, for example, the category $\mathbf{Top}$
of topological spaces and $\mathcal{X}$ is the category $\mathbf{Set}$ of
sets, $A$ and $A^{\prime }$ are two equal sets endowed with the same
topologies.

\begin{definition}
A structure in $\left( \mathcal{C},\left\vert .\right\vert ,\mathcal{X}%
\right) $ is an equivalence class of objects of $\mathcal{C}$ with respect
to the following equivalence relation: two objects $A$ and $A^{\prime }$ of $%
\mathcal{C}$ are equivalent if $\left\vert A\right\vert =\left\vert
A^{\prime }\right\vert $ and there is a $\mathcal{C}$-isomorphism $%
A\rightarrow A^{\prime }.$
\end{definition}

Some consequences of this definition are studied the sequel.

\section{Structures}

\subsection{Isomorphisms of structures}

Let $\mathfrak{S}_{1}$ and $\mathfrak{S}_{2}$\ be two structures in $\left( 
\mathcal{C},\left\vert .\right\vert ,\mathcal{X}\right) .$ \ Assume that
there exist representatives $A_{1}$ and $A_{2}$ of $\mathfrak{S}_{1}$ and $%
\mathfrak{S}_{2},$ respectively, which are isomorphic in $\mathcal{C}$. \
Then $\left\vert A_{1}\right\vert $ and $\left\vert A_{2}\right\vert $ are
not necessarily equal, but are isomorphic in $\mathcal{X}$. \ If $%
A_{1}^{\prime }$ and $A_{2}^{\prime }$ are other representatives of $%
\mathfrak{S}_{1}$ and $\mathfrak{S}_{2},$ they are isomorphic in $\mathcal{C}
$.

\begin{definition}
The structures $\mathfrak{S}_{1}$ and $\mathfrak{S}_{2}$\ are called
isomorphic if the above condition holds.
\end{definition}

This definition determines an equivalence relation on structures in $\left( 
\mathcal{C},\left\vert .\right\vert ,\mathcal{X}\right) $.

\subsection{Comparison of structures}

Obviously, if $\mathfrak{S}_{1}$ and $\mathfrak{S}_{2}$\ are two structures
in $\left( \mathcal{C},\left\vert .\right\vert ,\mathcal{X}\right) $, and $%
A_{1}$ and $A_{2}$ are representatives of $\mathfrak{S}_{1}$ and $\mathfrak{S%
}_{2},$ respectively, such that $\left\vert A_{1}\right\vert =\left\vert
A_{2}\right\vert $ and there exists a $\mathcal{C}$-morphism $%
A_{1}\rightarrow A_{2}$ such that $\left\vert A_{1}\rightarrow
A_{2}\right\vert =\func{id}_{A_{1}},$ then for any representatives $%
A_{1}^{\prime }$ and $A_{2}^{\prime }$ of $\mathfrak{S}_{1}$ and $\mathfrak{S%
}_{2},$ respectively, there exists a $\mathcal{C}$-morphism $A_{1}^{\prime
}\rightarrow A_{2}^{\prime }$ such that $\left\vert A_{1}\rightarrow
A_{2}\right\vert =\func{id}_{A_{1}^{\prime }}.$

\begin{definition}
If the above condition holds, then the structure $\mathfrak{S}_{1}$ is
called finer than $\mathfrak{S}_{2}$ (written $\mathfrak{S}_{1}\geq 
\mathfrak{S}_{2}$) and $\mathfrak{S}_{2}$ is called coarser than $\mathfrak{S%
}_{2}$ (written $\mathfrak{S}_{2}\leq \mathfrak{S}_{1}$).
\end{definition}

\begin{lemma}
\label{lemma-order-relation}The relation $\geq $ on structures in $\left( 
\mathcal{C},\left\vert .\right\vert ,\mathcal{X}\right) $ is an order
relation.
\end{lemma}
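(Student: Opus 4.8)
The plan is to verify the three defining properties of an order relation—reflexivity, antisymmetry, and transitivity—for the relation $\geq$ on structures. Throughout, I would work with representatives and appeal to the remark preceding Definition~5, which guarantees that the comparison condition is independent of the choice of representatives; this lets me check each property on conveniently chosen representatives.

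First I would handle reflexivity: given a structure $\mathfrak{S}$ with representative $A$, I need a $\mathcal{C}$-morphism $A \rightarrow A$ whose image under $\left\vert .\right\vert$ is $\func{id}_{\left\vert A\right\vert}$. The identity morphism $\func{id}_A$ works, since functors preserve identities, so $\left\vert \func{id}_A \right\vert = \func{id}_{\left\vert A\right\vert}$. Hence $\mathfrak{S} \geq \mathfrak{S}$. For transitivity, suppose $\mathfrak{S}_1 \geq \mathfrak{S}_2$ and $\mathfrak{S}_2 \geq \mathfrak{S}_3$. Choosing representatives $A_1, A_2, A_3$ with $\left\vert A_1\right\vert = \left\vert A_2\right\vert = \left\vert A_3\right\vert$, I get $\mathcal{C}$-morphisms $f\colon A_1 \rightarrow A_2$ and $g\colon A_2 \rightarrow A_3$ lying over the respective identities. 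Their composite $g \circ f\colon A_1 \rightarrow A_3$ satisfies $\left\vert g \circ f\right\vert = \left\vert g\right\vert \circ \left\vert f\right\vert = \func{id} \circ \func{id} = \func{id}$, since $\left\vert .\right\vert$ is a functor and therefore respects composition. This gives $\mathfrak{S}_1 \geq \mathfrak{S}_3$.

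The main obstacle is antisymmetry, which is where the specific definition of a structure as an equivalence class does the real work. Suppose $\mathfrak{S}_1 \geq \mathfrak{S}_2$ and $\mathfrak{S}_2 \geq \mathfrak{S}_1$. Picking representatives $A_1$ and $A_2$ with $\left\vert A_1\right\vert = \left\vert A_2\right\vert$, I obtain $\mathcal{C}$-morphisms $f\colon A_1 \rightarrow A_2$ and $g\colon A_2 \rightarrow A_1$ with $\left\vert f\right\vert = \left\vert g\right\vert = \func{id}_{\left\vert A_1\right\vert}$. The crux is to show that $f$ and $g$ are mutually inverse $\mathcal{C}$-isomorphisms. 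I would argue that $g \circ f\colon A_1 \rightarrow A_1$ satisfies $\left\vert g \circ f\right\vert = \func{id}_{\left\vert A_1\right\vert} = \left\vert \func{id}_{A_1}\right\vert$, and here I must invoke the faithfulness of the forgetful functor $\left\vert .\right\vert$ in a concrete category—this is precisely the property that makes a category concrete—to conclude $g \circ f = \func{id}_{A_1}$, and symmetrically $f \circ g = \func{id}_{A_2}$. Thus $f$ is a $\mathcal{C}$-isomorphism $A_1 \rightarrow A_2$ with $\left\vert A_1\right\vert = \left\vert A_2\right\vert$, so $A_1$ and $A_2$ are equivalent in the sense of Definition~1 and therefore represent the same structure, i.e.\ $\mathfrak{S}_1 = \mathfrak{S}_2$.

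I expect the antisymmetry step to be the delicate point, since it is the only place where one cannot get by with the mere functoriality of $\left\vert .\right\vert$: it genuinely requires that the functor be faithful, which I would state explicitly as the defining feature of a concrete category being used. I would also take care that the equivalence relation of Definition~1 demands \emph{both} $\left\vert A_1\right\vert = \left\vert A_2\right\vert$ \emph{and} the existence of a $\mathcal{C}$-isomorphism, both of which I have now produced, so that the conclusion $\mathfrak{S}_1 = \mathfrak{S}_2$ is legitimate.
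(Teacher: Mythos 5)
Your proposal is correct and follows essentially the same route as the paper: the paper likewise dismisses reflexivity and transitivity as obvious (you merely spell out the functoriality details) and proves antisymmetry by composing the two morphisms lying over identities, invoking faithfulness of the forgetful functor to conclude the composites are identities, hence the representatives are isomorphic over an equal underlying object and the structures coincide. No gaps; your explicit appeal to Definition~1 at the end matches the paper's concluding step.
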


\begin{proof}
The reflexivity and the transitivity are obvious. \ Assume that $\mathfrak{S}%
_{1}\geq \mathfrak{S}_{2}$ and $\mathfrak{S}_{2}\geq \mathfrak{S}_{1}.$ \
Let $A_{1},A_{2}$ be representatives of $\mathfrak{S}_{1},\mathfrak{S}_{2}$
respectively. \ There exist $\mathcal{C}$-morphisms $A_{1}\rightarrow A_{2}$
and $A_{2}\rightarrow A_{1}$ such that $\left\vert A_{1}\rightarrow
A_{2}\right\vert =\func{id}_{\left\vert A_{1}\right\vert }$ and $\left\vert
A_{2}\rightarrow A_{1}\right\vert =\func{id}_{\left\vert A_{2}\right\vert }.$
\ Thus $A_{1}\rightarrow A_{2}\longrightarrow A_{1}$ and $A_{2}\rightarrow
A_{1}\rightarrow A_{2}$ are $\mathcal{C}$-morphisms such that $\left\vert
A_{1}\rightarrow A_{2}\longrightarrow A_{1}\right\vert =\func{id}%
_{\left\vert A_{1}\right\vert }$ and $\left\vert A_{2}\rightarrow
A_{1}\rightarrow A_{2}\right\vert =\func{id}_{A_{2}}.$ \ Since the forgetful
functor is faithful, it follows that $A_{1}\rightarrow A_{2}\longrightarrow
A_{1}=\func{id}_{A_{1}}$ and $A_{2}\rightarrow A_{1}\rightarrow A_{2}=\func{%
id}_{A_{2}}.$ \ Therefore, $A_{1}\rightarrow A_{2}$ and $A_{2}\rightarrow
A_{1}$ are isomorphisms, and each of them is the inverse of the other. \ As
a result, $\mathfrak{S}_{1}=\mathfrak{S}_{2}.$
\end{proof}

\begin{example}
A structure in the concrete category $\left( \mathbf{Top},\left\vert
.\right\vert ,\mathbf{Set}\right) $ is a topology. \ The structure $%
\mathfrak{S}_{1}$ is finer than $\mathfrak{S}_{2}$ if, and only if the
topology $\mathfrak{S}_{1}$ is finer than the topology $\mathfrak{S}_{2}.$
\end{example}

\subsection{Initial structures and final structures}

Let $A,B$ be objects of $\mathcal{C}$. \ Recall that the relation "$%
f:\left\vert B\right\vert \rightarrow \left\vert A\right\vert $ is a $%
\mathcal{C}$-morphism" means that there exists a morphism $f:B\rightarrow A$
(necessarily unique) such that $\left\vert B\rightarrow A\right\vert
=\left\vert B\right\vert \rightarrow \left\vert A\right\vert $ (\cite{Adamek}%
, Remark 6.22).

Let $\left( f_{i}:A\rightarrow A_{i}\right) _{i\in I}$ be a source in $%
\mathcal{C}$, i.e. a family of $\mathcal{C}$-morphisms. \ This source is
called initial if for each object $B$ of $\mathcal{C},$ the relation

$\qquad $"$f:\left\vert B\right\vert \rightarrow \left\vert A\right\vert $
is a $\mathcal{C}$-morphism"

is equivalent to the relation

$\qquad $"for all $i\in I,$ $f_{i}\circ f:\left\vert B\right\vert
\rightarrow \left\vert A_{i}\right\vert $ is a $\mathcal{C}$-morphism".

Obviously, the source $\left( f_{i}:A\rightarrow A_{i}\right) _{i\in I}$ is
initial if, and only if for every object $A^{\prime }$ with the same
structure as $A,$ i.e. for which there exists a $\mathcal{C}$-isomorphism $%
\varphi :A^{\prime }\rightarrow A$ with $\left\vert \varphi \right\vert =%
\limfunc{id}_{\left\vert A\right\vert }$, the source $\left( f_{i}\circ
\varphi :A^{\prime }\rightarrow A_{i}\right) _{i\in I}$ is initial.

\begin{definition}
If the source $\left( f_{i}:A\rightarrow A_{i}\right) _{i\in I}$ is initial,
then the structure of $A$ is called initial with respect to the family $%
\left( f_{i}:\left\vert A\right\vert \rightarrow \left\vert A_{i}\right\vert
\right) _{i\in I}.$
\end{definition}

\begin{theorem}
If the structure $\mathfrak{S}$ of $A$ is initial for the family $\left(
f_{i}:\left\vert A\right\vert \rightarrow \left\vert A_{i}\right\vert
\right) _{i\in I},$ $\mathfrak{S}$ is the coarsest of all structures $%
\mathfrak{S}^{\prime }$ of objects\ $A^{\prime }$ of $\mathcal{C}$ such that
all $\mathcal{X}$-morphisms $f_{i}:\left\vert A^{\prime }\right\vert
\rightarrow \left\vert A_{i}\right\vert $ $\left( i\in I\right) $\ are $%
\mathcal{C}$-morphisms, and consequently is unique.
\end{theorem}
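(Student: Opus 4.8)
The plan is to unpack the definition of ``initial structure'' and translate the claimed extremal property into the finer/coarser relation defined earlier. Let $\mathfrak{S}$ be the structure of $A$, assumed initial for the family $\left( f_{i}:\left\vert A\right\vert \rightarrow \left\vert A_{i}\right\vert \right)_{i\in I}$. First I would fix an arbitrary competitor: an object $A^{\prime}$ whose structure $\mathfrak{S}^{\prime}$ makes every $f_{i}:\left\vert A^{\prime}\right\vert \rightarrow \left\vert A_{i}\right\vert$ a $\mathcal{C}$-morphism. The goal is to show $\mathfrak{S}^{\prime}\geq \mathfrak{S}$, i.e. that $\mathfrak{S}$ is coarser than every such $\mathfrak{S}^{\prime}$.

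To make the comparison relation applicable, I must work with representatives sharing the same underlying $\mathcal{X}$-object. The cleanest route is to choose $A^{\prime}$ with $\left\vert A^{\prime}\right\vert =\left\vert A\right\vert$; this is harmless because $\mathfrak{S}^{\prime}$ is an equivalence class and the comparison relation was shown (in the paragraph preceding Definition of finer/coarser) to be independent of the chosen representatives. Then I would apply the initiality hypothesis to the test object $B=A^{\prime}$ together with the candidate $\mathcal{X}$-morphism $f=\func{id}_{\left\vert A\right\vert }:\left\vert A^{\prime}\right\vert \rightarrow \left\vert A\right\vert$. By assumption on $A^{\prime}$, each composite $f_{i}\circ f = f_{i}:\left\vert A^{\prime}\right\vert \rightarrow \left\vert A_{i}\right\vert$ is a $\mathcal{C}$-morphism. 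The initiality of the source then forces $f=\func{id}_{\left\vert A\right\vert }:\left\vert A^{\prime}\right\vert \rightarrow \left\vert A\right\vert$ to be a $\mathcal{C}$-morphism, i.e. there is a $\mathcal{C}$-morphism $A^{\prime}\rightarrow A$ lying over $\func{id}$. By the definition of the comparison relation this says exactly $\mathfrak{S}^{\prime}\geq \mathfrak{S}$, establishing that $\mathfrak{S}$ is coarser than every competitor.

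It remains to note that $\mathfrak{S}$ is itself among the competitors (taking $A^{\prime}=A$ and $f_{i}=f_{i}$ trivially makes each $f_{i}$ a $\mathcal{C}$-morphism), so $\mathfrak{S}$ is a smallest element of the family of competing structures in the order $\geq$. Uniqueness is then immediate from Lemma~\ref{lemma-order-relation}: in a partially ordered set a smallest element is unique, so any initial structure for the given family coincides with $\mathfrak{S}$.

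The main obstacle, and the step deserving the most care, is the representative-independence bookkeeping. The comparison relation $\geq$ is defined on \emph{structures} (equivalence classes) via representatives with equal underlying objects, so I must be sure the initiality condition — which is phrased as a ``structure of $A$ is initial'' property and was already observed to be invariant under $\mathcal{C}$-isomorphisms over $\func{id}_{\left\vert A\right\vert}$ — interacts correctly when $A^{\prime}$ ranges over an arbitrary class $\mathfrak{S}^{\prime}$. The faithfulness of the forgetful functor, used in Lemma~\ref{lemma-order-relation}, guarantees that the lifted morphism $A^{\prime}\rightarrow A$ over $\func{id}$ is unique, which is what makes the ``$f$ is a $\mathcal{C}$-morphism'' phrasing well-posed and lets the argument close without ambiguity.
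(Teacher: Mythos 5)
Your proposal is correct and follows essentially the same route as the paper's own proof: apply initiality to the test object $B=A^{\prime}$ with $f=\func{id}_{\left\vert A\right\vert }$, obtain a $\mathcal{C}$-morphism $A^{\prime}\rightarrow A$ over the identity, conclude that $\mathfrak{S}$ is coarser than $\mathfrak{S}^{\prime}$, and deduce uniqueness from Lemma \ref{lemma-order-relation}. The only cosmetic difference is that you present $\left\vert A^{\prime }\right\vert =\left\vert A\right\vert$ as a choice of representative, whereas it is automatic (every representative of a competing structure has underlying object $\left\vert A\right\vert$, since the fixed $\mathcal{X}$-morphisms $f_{i}$ have that domain); this does not affect the argument.
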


\begin{proof}
Assume that the structure $\mathfrak{S}$ of $A$ is initial for the family $%
\left( f_{i}:\left\vert A\right\vert \rightarrow \left\vert A_{i}\right\vert
\right) _{i\in I}.$ \ Then all $f_{i}:\left\vert A\right\vert \rightarrow
\left\vert A_{i}\right\vert $ $\left( i\in I\right) $\ are $\mathcal{C}$%
-morphisms. \ Let $A^{\prime }$ be such all $f_{i}:\left\vert A^{\prime
}\right\vert \rightarrow \left\vert A_{i}\right\vert $ are $\mathcal{C}$%
-morphisms. \ Since $f_{i}$ is an $\mathcal{X}$-morphism, $\left\vert
A^{\prime }\right\vert =\left\vert A\right\vert $ and $f=\limfunc{id}%
_{\left\vert A\right\vert }$ is an $\mathcal{X}$-morphism and for each $i\in
I,$ $f_{i}\circ f:\left\vert A^{\prime }\right\vert \rightarrow \left\vert
A_{i}\right\vert =f_{i}$ is a $\mathcal{C}$-morphism. \ Thus $f:\left\vert
A^{\prime }\right\vert \rightarrow \left\vert A\right\vert $ is a $\mathcal{C%
}$-morphism and $\mathfrak{S}$ is coarser than the structure of $A^{\prime
}. $ \ This determines $\mathfrak{S}$ in a unique way by Lemma \ref%
{lemma-order-relation}.
\end{proof}

As shown by (\cite{Bourbaki-E}, p. IV.30, exerc. 30), it may happen that a
structure $\mathfrak{S}$ in a concrete category $\left( \mathcal{C}%
,\left\vert .\right\vert ,\mathcal{X}\right) $ be the coarsest of all
structures $\mathfrak{S}^{\prime }$ of objects\ $A^{\prime }$ of $\mathcal{C}
$ such all $\mathcal{X}$-morphisms $f_{i}:\left\vert A^{\prime }\right\vert
\rightarrow \left\vert A_{i}\right\vert $ are $\mathcal{C}$-morphisms, and
that this structure be not initial with respect to the family $\left(
f_{i}:\left\vert A\right\vert \rightarrow \left\vert A_{i}\right\vert
\right) _{i\in I}.$

The opposite of the concrete category $\left( \mathcal{C},\left\vert
.\right\vert ,\mathcal{X}\right) $ is $\left( \mathcal{C}^{\limfunc{op}%
},\left\vert .\right\vert ^{\limfunc{op}},\mathcal{X}^{\limfunc{op}}\right) $
where for each pair of objects $A,B$ of $\mathcal{C}$, and any morphism $%
A\longleftarrow B$ in $\mathcal{C}^{\limfunc{op}},\ \left\vert
A\longleftarrow B\right\vert ^{\limfunc{op}}=\left\vert A\leftarrow
B\right\vert .$ \ The structure of $A$ is called final with respect to the
family $\left( f_{i}:\left\vert A\right\vert \longleftarrow \left\vert
A_{i}\right\vert \right) _{i\in I}$ in $\left( \mathcal{C},\left\vert
.\right\vert ,\mathcal{X}\right) $\ if it is initial with respect to this
family in $\left( \mathcal{C}^{\limfunc{op}},\left\vert .\right\vert ^{%
\limfunc{op}},\mathcal{X}^{\limfunc{op}}\right) .$

\section{Applications}

Examples of initial structures (\cite{Bourbaki-E}, \S\ 2.4): inverse image
of a structure, induced structure, concrete product (see Remark \ref%
{remark-product}).

Examples of final structures (\cite{Bourbaki-E}, \S\ 2.6): direct image of a
structure, quotient structure, concrete coproduct (see Remark \ref%
{remark-product}).

\begin{remark}
\label{remark-product}Let $\mathcal{P}=\left( \limfunc{pr}_{i}:A\rightarrow
A_{i}\right) _{i\in I}$ be a product in $\mathcal{C}.$ \ This product is
called concrete in $\left( \mathcal{C},\left\vert .\right\vert ,\mathcal{X}%
\right) $ if $\left\vert \mathcal{P}\right\vert =\left( \left\vert \limfunc{%
pr}_{i}\right\vert :\left\vert A\right\vert \rightarrow \left\vert
A_{i}\right\vert \right) _{i\in I}$ is a product in $\mathcal{X}$ (\cite%
{Adamek}, \S 10.52). \ All products considered in (\cite{Bourbaki-E}, \S\ %
2.4) are concrete but there exist concrete categories with non-concrete
products (\cite{Adamek}, \S 10.55(2)). \ Dually, there exist concrete
categories with non-concrete coproducts (\cite{Adamek}, \S 10.67).
\end{remark}

\section{References}

\end{document}